\journal{ }
\begin{document}

\begin{frontmatter}



\title{Packing $(2^{k+1}-1)$-order perfect binary trees into (\emph{k}+1)-connected graph}


\author{Jia Zhao}
\address{Beijing Jiaotong University, Beijing, China, \{11111004@bjtu.edu.cn\}}

\author{Jianfeng Guan}
\address{Beijing University of Posts and Telecommunications, Beijing, China, \{jfguan@bupt.edu.cn\}}

\author{Changqiao Xu}
\address{Beijing University of Posts and Telecommunications, Beijing, China, \{cqxu@bupt.edu.cn\}}

\author{Hongke Zhang}
\address{Beijing Jiaotong University, Beijing, China, \{hkzhang@bjtu.edu.cn\}}

\begin{abstract}
Let $G=(V,E)$ and $H$ be two graphs. Packing problem is to find in $G$ the largest number of independent subgraphs each of which is isomorphic to $H$. Let $U\subset{V}$. If the graph $G-U$ has no subgraph isomorphic to $H$, $U$ is a cover of $G$. Covering problem is to find the smallest set $U$. The vertex-disjoint tree packing was not sufficiently discussed in literature but has its applications in data encryption and in communication networks such as multi-cast routing protocol design. In this paper, we give the kind of $(k+1)$-connected graph $G'$ into which we can pack independently the subgraphs that are each isomorphic to the $(2^{k+1}-1)$-order perfect binary tree $T_k$. We prove that in $G'$ the largest number of vertex-disjoint subgraphs isomorphic to $T_k$ is equal to the smallest number of vertices that cover all subgraphs isomorphic to $T_k$. Then, we propose that $T_k$ does not have the \emph{Erd\H{o}s-P\'{o}sa} property. We also prove that the $T_k$ packing problem in an arbitrary graph is NP-hard, and propose the distributed approximation algorithms.

\end{abstract}

\begin{keyword}
Packing \sep covering \sep perfect binary tree \sep vertex-disjoint

\end{keyword}

\end{frontmatter}


\section{Introduction}
\label{}
Let $G=(V,E)$ and $H$ be two graphs. Packing $H$ into $G$ is to find vertex-disjoint subgraphs each of which is isomorphic to $H$. Covering every subgraph isomorphic to $H$ in $G$ is to find a set $U\subset{V}$ such that $G-U$ has no subgraph isomorphic to $H$. In $G$ the largest number of vertex-disjoint subgraphs that are each isomorphic to $H$ is related to the smallest number of vertices that cover all subgraphs that are each isomorphic to $H$. Packing and covering problems are dual and manifest some properties of the subgraph $H$. Let $y$ denote the smallest number of vertices of a cover. If in arbitrary graph $G$ we have $y\leq{f(x)}$ where $x$ is the largest number of vertex-disjoint subgraphs that are each isomorphic to $H$, then $H$ has the \emph{Erd\H{o}s-P\'{o}sa} property [1]. Cycle has the \emph{Erd\H{o}s-P\'{o}sa} property and packing cycle has been studied in related work [2][3].

Packing and covering also give rise to some rather difficult problems in computer science. Packing the simple subgraph of a single edge into arbitrary graph is called matching. Matching problem is to find the largest number of independent edges and has been proved a NP-complete problem in computational complexity. Many other packing problems are also NP-complete or NP-hard. Since it is difficult to deal with these problems, packing vertex-disjoint graphs of other sorts such as trees were not sufficiently discussed in literature. However, packing vertex-disjoint trees has its applications in data encryption and in communication networks. The distributed multi-cast routing in the current Internet can be modeled as a tree packing problem. The multi-cast routers (vertices of the graph) can be divided into independent groups (trees) for efficient data transmission and interconnection of networks [4]. Emergence of new networking design [5] also take advantage of the content-distributed groups (trees) to disseminate content and ensure no matter which groups are choose each group contains a fixed source router (covering problem).

In this paper, we are primarily concerned with the perfect binary tree packing problem, which includes the kind of graph into which we can pack the tree, in this kind of graph the relation between the largest number of vertex-disjoint subgraphs isomorphic to the tree and the smallest number of vertices that cover all subgraphs isomorphic to the tree, the computational complexity of such vertex-disjoint tree packing into an arbitrary graph, and the applicable approximation algorithms for these problems. For the perfect binary tree $T_1$ with $|T_1|=3$, we give the 2-connected graph $G$ that has the order of $3r$ ($r\geq{1}$ is a positive integer) and can be constructed from the longest cycle $C$ contained in $G$. We prove that in $G$ the largest number $\alpha$ of vertex-disjoint subgraphs isomorphic to $T_1$ is equal to the smallest number $\beta$ of vertices that cover all subgraphs isomorphic to $T_1$. Cycles have the \emph{Erd\H{o}s-P\'{o}sa} property. But perfect binary trees do not have such property that is also proved in this paper. For the perfect binary tree $T_2$ with $|T_2|=7$, we give the 3-connected graph $G$ which can be generated from a minor of $G$ isomorphic to the complete graph $K^4$. The kind of $(k+1)$-connected graph, generated from $K^{k+2}$ as a minor, also have the largest packing subgraph number equal to the smallest covering vertex number. Karp in [6] proposed that matching problem is NP-complete. Based on this, we prove that perfect binary tree packing problem is NP-hard. Since distributed algorithms such as Dijkstra Algorithm in the current Internet have been widely applied in communication networks, we propose distributed approximation algorithms for packing $T_k$ into an arbitrary graph. A key process in the algorithm is to detect the blocks of a connected graph. The $T_1$ packing algorithm has complexity of $O(n^2)$ and the $T_2$ packing algorithm has complexity of $O(n^4)$.

The remainder of this paper is organized as follows. In section 2, we give the lemmas and theorems about packing $(2^{k+1}-1)$-order perfect binary tree into $(k+1)$-connected graph. In section 3, we prove the computational complexity and propose approximation algorithms for packing $T_k$ into an arbitrary graph.

\section{Packing Perfect Binary Trees}
\label{}
Let $T_{k}$ denote the $(2^{k+1}-1)$-order perfect binary tree.

\subsection{Packing $T_1$ into 2-connected graph}
\newtheorem{lemma}{Lemma}[section]
\begin{lemma}
Let $r\geq{1}$ be a positive integer, $P^{3r}$ be a path of length $3r$ and $C^{3r}$ be a cycle of length $3r$. Then

(i) at most $r$ vertex-disjoint subgraphs isomorphic to $T_1$ can be packed into $P^{3r}$; at least $r$ vertices of $P^{3r}$ suffice to meet all its subgraphs isomorphic to $T_1$;

(ii) at most $r$ vertex-disjoint subgraphs isomorphic to $T_1$ can be packed into $C^{3r}$; at least $r$ vertices of $C^{3r}$ suffice to meet all its subgraphs isomorphic to $T_1$.
\end{lemma}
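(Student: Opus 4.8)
The plan is to exploit the fact that $T_1$, the perfect binary tree of order $2^{1+1}-1=3$, is nothing but the path $P^2$ on three vertices (a centre joined to two leaves). Consequently, inside a path or a cycle every subgraph isomorphic to $T_1$ must be a block of three \emph{consecutive} vertices: its centre has degree two, and in both $P^{3r}$ and $C^{3r}$ a degree-two vertex together with its two neighbours is forced to be such a block. This single structural observation reduces the whole lemma to counting three-term windows, and both halves will then follow from arithmetic modulo $3$.

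For the packing upper bounds I would argue by a vertex count. Since $P^{3r}$ has $3r+1$ vertices and any family of $t$ vertex-disjoint copies of $T_1$ occupies $3t$ distinct vertices, we get $3t\le 3r+1$, hence $t\le r$; the same count on $C^{3r}$, which has exactly $3r$ vertices, gives $3t\le 3r$ and again $t\le r$. To see the bound is attained I would display the explicit packing that cuts the vertex sequence into consecutive triples $\{v_1,v_2,v_3\},\{v_4,v_5,v_6\},\dots,\{v_{3r-2},v_{3r-1},v_{3r}\}$; each triple is a copy of $T_1$ and there are $r$ of them, so the maximum packing number equals $r$ in both cases, the spare vertex $v_{3r+1}$ of the path being simply left unused.

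For the covering half I would exhibit the set $U=\{v_3,v_6,\dots,v_{3r}\}$ of every third vertex, which has $|U|=r$. Because each copy of $T_1$ is a window $\{v_{i-1},v_i,v_{i+1}\}$ of three consecutive indices, and any three consecutive integers contain exactly one multiple of $3$, some vertex of the window lies in $U$; thus $U$ meets every copy, and indeed $P^{3r}-U$ (resp. $C^{3r}-U$) falls apart into components of order at most two, which contain no $T_1$. Hence $r$ vertices suffice to cover, i.e. the covering number is at most $r$. The matching lower bound is then free: the $r$ disjoint copies produced above must each be hit by a distinct cover vertex, so no cover can have fewer than $r$ vertices, giving covering number $=r$ and, in particular, equality with the packing number.

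The arguments are essentially arithmetic, so I expect no serious obstacle; the only points needing a line of care are the wrap-around window $\{v_{3r},v_1,v_2\}$ in the cyclic case — which still contains the multiple-of-three vertex $v_{3r}\in U$, so the congruence argument survives modulo $3r$ — and the bookkeeping of the single leftover vertex $v_{3r+1}$ in the path, neither of which disturbs the counts.
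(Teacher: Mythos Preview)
Your proposal is correct and follows essentially the same approach as the paper: both arguments rest on partitioning the vertex sequence into consecutive triples for the packing and selecting every third vertex for the cover. Your write-up is in fact more complete, since you supply the easy vertex-count upper bound $3t\le |V|$ for the packing and the duality argument (each of the $r$ disjoint copies needs its own cover vertex) for the covering lower bound, whereas the paper's proof only exhibits the two constructions without explicitly justifying optimality.
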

\begin{proof}
The path $P^{3r}$ can be expressed as a sequence of its vertices, i.e. $P^{3r}=a_{0}a_{1}a_{2}...a_{3r-2}a_{3r-1}a_{3r}$. In the same way $C^{3r}$ is expressed as $C^{3r}=a_{0}a_{1}a_{2}...a_{3r-2}a_{3r-1}a_{0}$. We choose the three consecutive points $a_{3i}$, $a_{3i+1}$ and $a_{3i+2}$ where $0\leq{i}\leq{r-1}$ to be a group. Then we can divide $P^{3r}$ or $C^{3r}$ into $r$ groups. In order to pack as many vertex-disjoint subgraphs isomorphic to $T_1$ as possible, we choose the subgraph $S_{i}=a_{3i}a_{3i+1}a_{3i+2}$ and get total $r$ such subgraphs.

The covering problem also needs deliberate selection of a vertex set. Due to the symmetric property of the path and cycle, we just have to decide the distance between two adjacent cover points. We choose the set {${a_{3i+1}}$} as a cover. The vertex $a_{3i+1}$ covers three subgraphs isomorphic to $T_1$: $a_{3i-1}a_{3i}a_{3i+1}$, $a_{3i}a_{3i+1}a_{3i+2}$ and $a_{3i+1}a_{3i+2}a_{3i+3}$. So, the cover set contains $r$ vertices.
\end{proof}

\newtheorem{theorem}[lemma]{Theorem}
\begin{theorem}
Let a 2-connected graph $G$ contains a cycle $C^{3r}$ of length $3r$, which is the longest among all its cycles. Let the length of $G$-path be three or a multiple of three and the ends of $G$-path be in the vertex set covering all $G$'s subgraphs isomorphic to $T_1$. Then in $G$ the largest number $\alpha$ of vertex-disjoint subgraphs isomorphic to $T_1$ is equal to the smallest number $\beta$ of vertices that cover all subgraphs isomorphic to $T_1$.
\end{theorem}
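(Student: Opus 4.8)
The plan is to prove the two inequalities $\alpha\le\beta$ and $\beta\le\alpha$ separately: the first by a one--line duality argument and the second by an explicit simultaneous construction of a packing and a cover of the same cardinality. First I would dispose of $\alpha\le\beta$. Fix a maximum family of vertex-disjoint copies $S_1,\dots,S_\alpha$ of $T_1$ and an arbitrary cover $U$. Since $G-U$ contains no copy of $T_1$, every $S_i$ must meet $U$; choosing one vertex of $S_i\cap U$ for each $i$ and using that the $S_i$ are pairwise disjoint produces $\alpha$ distinct vertices of $U$, whence $\beta\ge\alpha$. This half uses nothing about the structure of $G$.

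For $\beta\le\alpha$ I would exploit that $G$ is $2$-connected, so it admits an (open) ear decomposition whose initial cycle is the longest cycle $C^{3r}$ and whose ears are exactly the $G$-paths; by hypothesis each such $G$-path has length $3s$ for some $s\ge1$. The idea is to treat the cycle and each ear locally with the preceding Lemma and then glue the local solutions. On $C^{3r}$ the Lemma gives $r$ vertex-disjoint copies of $T_1$ using all $3r$ cycle vertices, together with a cover of the cycle by $r$ vertices (one in every three). On an ear $b_0b_1\cdots b_{3s}$ whose endpoints $b_0,b_{3s}$ lie on the already constructed part, the $3s-1$ interior vertices form a subpath to which the Lemma applies: I would pack the $s-1$ interior copies $b_1b_2b_3,\,b_4b_5b_6,\dots$ and cover the interior with $s-1$ vertices, relying on the hypothesis that $b_0$ and $b_{3s}$ are themselves cover vertices to kill the two boundary triples $b_0b_1b_2$ and $b_{3s-2}b_{3s-1}b_{3s}$. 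Summing, both the packing and the cover have size $r+\sum_j(s_j-1)$, and since the interior vertices of distinct ears are new and pairwise disjoint the local packings assemble into one global vertex-disjoint family. Combined with $\alpha\le\beta$ this forces $\alpha=\beta$.

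It remains to check that the assembled set $U$ really covers every copy of $T_1$ and that the count is honest. A copy $xyz$ with centre $y$ falls into one of three cases: if $y$ is interior to an ear then both its edges lie on that ear and the interior cover catches it; if $y$ is a degree-two cycle vertex then the triple lies on $C^{3r}$ and the cycle cover catches it; and if $y$ is a branch vertex, i.e. an endpoint of some $G$-path, then the hypothesis that the $G$-path ends belong to the cover catches it directly. I expect the main obstacle to be precisely this last case, together with the bookkeeping it forces: one must verify that all branch vertices can be absorbed into a minimum cover without paying extra, i.e. that among the three ``every third vertex'' covers of $C^{3r}$ one can be selected (and the ear covers shifted accordingly) so that every $G$-path endpoint is simultaneously a cover vertex. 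This compatibility is exactly what the two structural hypotheses --- lengths divisible by three, and endpoints lying in the cover --- are there to guarantee; making it rigorous when several ears share or crowd their attachment points on the cycle, so that no triple at a junction escapes the cover and no vertex is double-counted, is the delicate step, whereas the purely local packing and covering counts are immediate from the preceding Lemma.
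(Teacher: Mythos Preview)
Your proposal and the paper share the same skeleton: build $G$ from $C^{3r}$ by an ear decomposition and handle the cycle and each ear locally via Lemma~2.1. Where the paper argues by a case distinction on the positions of the ear endpoints along $C^{3r}$ (its Cases~1--4), you give a single uniform construction --- $r$ triples on the cycle, $s_j-1$ interior triples on each ear of length $3s_j$, and a cover of matching size --- and sandwich with the weak duality $\alpha\le\beta$, which the paper leaves implicit. This is tidier and yields the explicit value $\alpha=\beta=r+\sum_j(s_j-1)$, whereas the paper only checks equality case by case.

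Two remarks. First, be careful with the placement of the $s-1$ interior cover vertices on an ear: taking the centres $b_2,b_5,\dots,b_{3s-4}$ of your packed triples leaves the interior triple $b_{3s-3}b_{3s-2}b_{3s-1}$ uncovered; you need the shifted set $\{b_3,b_6,\dots,b_{3s-3}\}$ so that the endpoints $b_0,b_{3s}$ really do absorb only the two boundary triples you name. Second, your argument never invokes the hypothesis that $C^{3r}$ is a \emph{longest} cycle, which the paper uses in its Case~3 to exclude a crossing configuration of ears. That is not a gap on your side: under the remaining assumptions (ear lengths divisible by three, ear endpoints lying in the cover) your packing-and-cover construction goes through regardless, so the longest-cycle condition appears to be inessential for the conclusion $\alpha=\beta$. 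You correctly flag the one genuinely delicate point --- that the chosen ``every third vertex'' cover can be made to contain all branch vertices simultaneously --- and, like the paper, you discharge it by appeal to the standing hypothesis rather than by an independent argument.
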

\begin{proof}
According to the property of 2-connected graphs [2], We can reconstruct the 2-connected graph $G$ in this way: (i) to find in $G$ a cycle $C$; (ii)to add $C$-paths to $C$ to form a graph $H$; (iii) to add $H$-paths successively to $H$ until we get $G$. Following this way, we can construct the 2-connected graph that meets the requirements in the theorem. We need (i)find the longest cycle $C$ whose length is $3r$; (ii)find each $C$-path $P$ whose ends are in the covering vertex set $U$ and get $G'=C\cup{P}$; (iii)find possible $G'$-paths whose ends are also in $U$. $C^{3r}$ is expressed as the vertex sequence $C^{3r}=a_{0}a_{1}a_{2}...a_{3r-2}a_{3r-1}a_{0}$. Then we consider four cases:

Case 1: Two adjacent vertices of $C^{3r}$ can not be ends of a $C^{3r}$-path, because lemma 2.1 proves that the vertices $a_{3i}$ and $a_{3i+1}$ are not both in the covering set of $C^{3r}$. According to lemma 2.1, $a_{3i}$ and $a_{3i+2}$ also have no $C^{3r}$-path between them. Therefore this case can be reduced to the lemma 2.1(ii) and we have $\alpha=\beta=r$.

Case 2: The vertices $a_{3i}$ and $a_{3(i+1)}$ can be the ends of a $C^{3r}$-path whose length is exactly three. However, in this case we can pack no independent subgraph isomorphic to $T_1$, and the path contains no vertex in $U$. So we have $\alpha=\beta=r$.

Case 3: The vertices $a_{3i}$ and $a_{3(i+2)}$ can be the ends of a $C^{3r}$-path whose length are either 3 or 6. If the length of the path equals to 3, then no independent subgraph isomorphic to $T_1$ can be packed and we have $\alpha=\beta=r$. If the length of the path equals to 6, we have $\alpha=\beta=r+1$. If there are $q$ independent $C^{3r}$-paths between $a_{3i}$ and $a_{3(i+1)}$ each of which has a length of 6, then we have $\alpha=\beta=r+q$. The only trouble in this case is the situation in which $a_{3i}$ and $a_{3(i+2)}$ are the ends of a $C^{3r}$-path $P_1$, and $a_{3(i+1)}$ and $a_{3(i+3)}$ are the ends of another $C^{3r}$-path $P_2$. But this trouble can not happen because of the requirement of the longest cycle $C^{3r}$. In this situation the distance between $a_{3i}$ and $a_{3(i+3)}$ on the cycle is 9, but we can find a path $P$ between $a_{3i}$ and $a_{3(i+3)}$ as $P=P_{1}a_{3(i+2)}a_{3(i+2)-1}a_{3(i+2)-2}a_{3(i+1)}P_{2}$ whose length $\|P\|=15>9$. This contradicts the the requirement of the longest cycle.

Case 4: The vertices $a_{3i}$ and $a_{3(i+f)}$ can be the ends of a $C^{3r}$-path whose length is $3l$ where $l\leq{f}$. We still have $\alpha=\beta$ in this case. We may consider whether we can add a path with one end in the $C^{3r}$-path and another end in $C^{3r}$. This path, if we can add it, does not have the length more than 3 or else it will contradict the longest cycle. Hence, such path does not enlarge the number $\alpha$ or $\beta$.
\end{proof}

\newtheorem{theorem1}[lemma]{Theorem}
\begin{theorem1}
The perfect binary tree does not have the Erd\H{o}s-P\'{o}sa property.
\end{theorem1}
\begin{proof}
We illustrate this with the opposing example in 2-connected graph. Theorem 2.2 requires the ends of a $C^{3r}$-path to belong to the covering vertex set. We consider that the ends of a $C^{3r}$-path are not in the cover. Let $\alpha$ denote the largest number of vertex-disjoint subgraphs isomorphic to $T_1$ and $\beta$ denote the smallest number of vertices that cover all subgraphs isomorphic to $T_1$. To construct a 2-connected graph, we choose $a_{3i+1}$ and $a_{3i+4}$ as the ends of $h$ independent $C^{3r}$-paths whose length are all three. Both $a_{3i+1}$ and $a_{3i+4}$ are not in the covering vertex set. Then we have $\alpha=r$ and $\beta=r+h$. Obviously, $\beta$ can not be bounded by $\alpha$ since $h$ can be any positive integer.
\end{proof}

\subsection{Packing $T_2$ into 3-connected graph}
Let $G$ and $H$ be two graphs. If $H$ can be obtained from $G$ by contracting some edges of $G$, we call $H$ a minor of $G$ and write $G=MH$.

According to the theorem about connectivity [7], any 3-connected graph can be finally changed into $K^4$ by a series of contraction. In other words, any 3-connected graph can be generated from $K^4$. In order to find the kind of 3-connected graph into which we can pack $T_2$ trees independently, we need build the elementary structure that contains a subgraph isomorphic to $T_2$. The following lemma works on this.

\begin{figure}
\includegraphics[width=14cm]{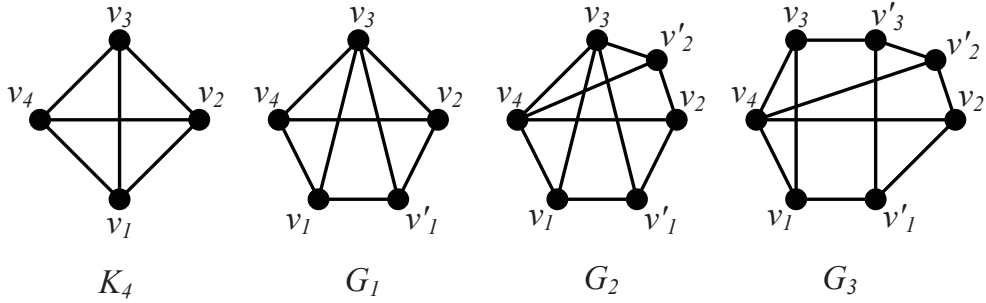}\\
\caption{The elementary structure $G_3$ generated from $K^4$}\label{}
\end{figure}

\newtheorem{lemma1}[lemma]{Lemma}
\begin{lemma1}
The simplest 3-connected graph that contains a subgraph isomorphic to $T_2$ can be obtained from $K^4$ by a series of anti-contraction.
\end{lemma1}
\begin{proof}
Let $\{v_1, v_2, v_3, v_4\}$ be the vertex set of $K^4$. The anti-contraction is to divide one vertex into two to form a new graph whose minor is still $K^4$. We do three times of anti-contraction as follows:

(i) we divide $v_1$ into two vertices $v_1$ and $v'_1$. To keep $d(v_1)=d(v'_1)=3$, we add the new edges $\{v_1v'_1\}$ and $\{v'_1v_3\}$. We call this new graph $G_1$ and have $G_1=MK^4$.

(ii) we divide $v_2$ into two vertices $v_2$ and $v'_2$. To keep $d(v_2)=d(v'_2)=3$, we add the new edges $\{v_2v'_2\}$ and $\{v'_2v_4\}$. We call this new graph $G_2$ and have $G_2=MG_1=MK^4$.

(iii) we divide $v_3$ into two vertices $v_3$ and $v'_3$. To keep $d(v_3)=d(v'_3)=3$, we add the new edges $\{v_3v'_3\}$, $\{v'_1v'_3\}$. We call this new graph $G_3$ as show in Figure 1 and have $G_3=MG_2=MG_1=MK^4$.

To make the structure as simple as possible, we ensure the number of vertices of degree 3 as many as possible after doing the anti-contraction. In this simplest graph, we can find several subgraphs isomorphic to $T_2$. For example, let $v_1$ be the root, then $v_4$ and $v'_1$ are the offsprings of $v_1$, $v'_3$ and $v'_2$ are offsprings of $v_4$, and $v_3$ and $v_2$ are the offsprings of $v'_1$.
\end{proof}

\newtheorem{lemma2}[lemma]{Lemma}
\begin{lemma2}
Let $G$ be a 3-connected graph with $|G|=7r$ and $r\geq{1}$ be a positive integer. Then $G$ can be generated from $G_3$ so that independent subgraphs isomorphic to $T_2$ can be packed into $G$.
\end{lemma2}
\begin{proof}
$G$ can be generated from $G_3$ after $r$ steps. In step $i$ ($2\leq{i}\leq{r}$), we add a component to graph $H_{i-1}$ and get $H_i$. The component is a graph isomorphic to $G_3$. Let $\{v_1, v_2, v_3, v_4, v_5, v_6, v_7\}$ be the vertex set of a subgraph $X_{i-1}$ contained in $H_{i-1}$. $X_{i-1}$ is isomorphic to $G_3$. Let $\{v'_1, v'_2, v'_3, v'_4, v'_5, v'_6, v'_7\}$ be the vertex of $X_{i}$ isomorphic to $G_3$. Then we have $H'_{i}=H_{i-1}\cup{X_{i}}$. We add new edge set $E'=\{\{v_1v'_1\}, \{v_2v'_2\}, \{v_3v'_3\}, \{v_4v'_4\}, \{v_5v'_5\}, \{v_6v'_6\}, \{v_7v'_7\}\}$ to $H'_{i}$ and get $H_{i}=H'_{i}\cup{E'}$.
\end{proof}

\begin{figure}
\centerline{\includegraphics[width=9cm]{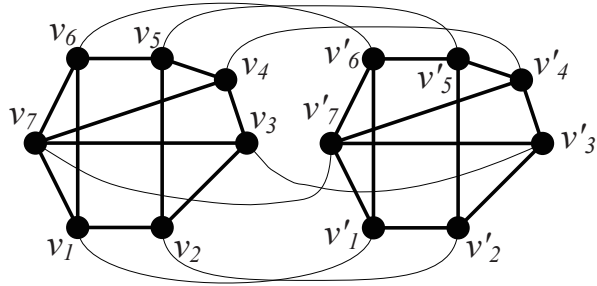}\\}
\caption{The kind of 3-connected graph generated from $G_3$}\label{}
\end{figure}

\newtheorem{theorem2}[lemma]{Theorem}
\begin{theorem2}
In the 3-connected graph $G=H_r$, the largest number $\alpha$ of vertex-disjoint subgraphs isomorphic to $T_2$ is equal to the smallest number $\beta$ of vertices that cover all subgraphs isomorphic to $T_2$.
\end{theorem2}
\begin{proof}
Consider the case in the Figure 2. According to lemma 2.4 and lemma 2.5, we can pack two vertex-disjoint perfect binary trees $T=(V, E)$ and $T'=(V', E')$ into the graph where

$V(T)=\{v_1, v_2, v_3, v_4, v_5, v_6, v_7\}$

$E(T)=\{\{v_1v_7\}, \{v_1v_2\}, \{v_7v_6\}, \{v_7v_4\}, \{v_2v_5\}, \{v_2v_3\}\}$

$V'(T)=\{v'_1, v'_2, v'_3, v'_4, v'_5, v'_6, v'_7\}$

$E'(T)=\{\{v'_1v'_7\}, \{v'_1v'_2\}, \{v'_7v'_6\}, \{v'_7v'_4\}, \{v'_2v'_5\}, \{v'_2v'_3\}\}$

In Figure 2 all the subgraphs isomorphic to $T_2$ come from two parts: (i)the subgraphs isomorphic to $G_3$ contains several subgraphs isomorphic to $T_2$; (ii)because the edge set

$\{\{v_1v'_1\}, \{v_2v'_2\}, \{v_3v'_3\}, \{v_4v'_4\}, \{v_5v'_5\}, \{v_6v'_6\}, \{v_7v'_7\}\}$ exists in the graph, the tree that we pack may contain vertices from different subgraphs isomorphic to $G_3$. For example, the tree $T''$ could be

$V(T'')=\{v_1, v_2, v_3, v_4, v_5, v'_6, v_7\}$

$E(T'')=\{\{v_1v_7\}, \{v_1v_2\}, \{v_7v_6'\}, \{v_7v_4\}, \{v_2v_5\}, \{v_2v_3\}\}$.

These trees make it difficult to decide the number $\beta$. But there are two special vertices in the graph. A 7-order perfect binary tree has three levels. No matter which subgraph is packed, the vertex $v_7$ or $v'_7$ must be at the second level (between the root level and leaf level) because of their degree $d(v_7)=d(v'_7)=5$ larger than other vertices. If we choose $v_7$ and $v'_7$ as the covering vertices, they can cover all the subgraphs isomorphic to $T_2$. Hence, we have $\alpha=\beta$.
\end{proof}

\subsection{Packing $T_k$ into $(k+1)$-connected graph}
\newtheorem{proposition}[lemma]{Proposition}
\begin{proposition}
In the 3-connected graph $G=H_r$, the largest number $\alpha$ of vertex-disjoint subgraphs isomorphic to $T_2$ is equal to the smallest number $\beta$ of vertices that cover all subgraphs isomorphic to $T_2$.
\end{proposition}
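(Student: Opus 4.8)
Read in the light of the subsection heading \emph{Packing $T_k$ into $(k+1)$-connected graph}, this proposition is the $T_k$ analogue of Theorem 2.6, and my plan is to lift the $T_2$ argument (together with Lemmas 2.4 and 2.5) to the general $(k+1)$-connected setting, so that the statement about $H_r$ becomes the final step of a construction parametrized by $k$. First I would build the elementary structure $G_{k+1}$ from $K^{k+2}$ exactly as $G_3$ was built from $K^4$: since $|T_k| = 2^{k+1}-1$ and $|K^{k+2}| = k+2$, I would perform $2^{k+1}-k-3$ anti-contractions (for $k=2$ this is $3$, matching Lemma 2.4), each time splitting a vertex $v$ into $v,v'$ and redistributing its incident edges, adding the edge $vv'$ plus one reconnecting edge, so that every new vertex keeps degree at least $k+1$, the minor remains $K^{k+2}$, and the graph stays $(k+1)$-connected. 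The splits are chosen so that $G_{k+1}$ contains a copy of $T_k$ while maximizing the number of degree-$(k+1)$ vertices, isolating the few higher-degree hub vertices that will later drive the covering bound.

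Next, following Lemma 2.5, I would generate $G=H_r$ by taking $r$ labelled copies $X_1,\dots,X_r$ of $G_{k+1}$ and, at step $i$, joining $X_i$ to $H_{i-1}$ by a perfect matching between corresponding vertices. This preserves $(k+1)$-connectivity, and by the construction of $G_{k+1}$ each copy contributes one vertex-disjoint $T_k$; this exhibits an explicit packing of size $r$ and gives $\alpha \ge r$.

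For the covering bound I would isolate in each copy the analogue of $v_7$: the vertex of strictly largest degree in $G_{k+1}$. Recall that $T_k$ has $k+1$ levels, with root of degree $2$, internal vertices of degree $3$, and $2^k$ leaves of degree $1$. I would argue that such a hub is forced into an internal branching position, and, crucially, that every subgraph of $G$ isomorphic to $T_k$ meets at least one hub, since the matching edges of Lemma 2.5 are the only links between copies and cannot route an entire $T_k$ so as to avoid all hubs. Taking the $r$ hubs as the cover $U$ then yields $\beta \le r$. Combining this with the elementary inequality $\alpha \le \beta$ (the $\alpha$ packed trees are pairwise disjoint and each must meet $U$) gives $r \le \alpha \le \beta \le r$, hence $\alpha = \beta = r$.

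The main obstacle will be the covering step, namely proving rigorously that the hub vertices meet every copy of $T_k$. The degree heuristic used for $T_2$ (``$d(v_7)=5$ forces $v_7$ to the middle level'') does not by itself forbid a $T_k$ from using a hub as a leaf, nor from avoiding every hub by exploiting the inter-copy matching edges. I would close this gap by a level-counting argument: bounding how many level-$j$ vertices a single copy $X_i$ can supply to an embedded $T_k$, and showing that any $T_k$ assembled across two or more copies without touching a hub would violate either the degree constraints forced by $(k+1)$-connectivity or the requirement that $T_k$ carry $2^k$ leaves.
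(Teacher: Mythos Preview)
Your proposal is correct and follows essentially the same route as the paper: build an elementary $(k+1)$-connected block from $K^{k+2}$ by successive anti-contractions, join $r$ copies by a perfect matching on corresponding vertices, pack one $T_k$ per copy, and cover with the unique highest-degree ``hub'' vertex in each copy. The paper's own argument is in fact sketchier than yours---it simply asserts that the hub ``must be at the second level of a perfect binary tree'' and hence meets every copy of $T_k$, without addressing the gap you correctly flag (that high degree alone does not preclude the hub from being used as a leaf or avoided entirely via matching edges); your proposed level-counting refinement goes beyond what the paper supplies.
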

\begin{proof}
The $(k+1)$-connected graph that contains a subgraph isomorphic to $T_k$ can be generated from $K^{k+2}$ by doing the anti-contraction successively. Suppose we get the elementary structure $H$, and we add the graph $H'$ isomorphic to $H$ as a component to the new graph $H\cup{H'}$. Then we can add a edge between the corresponding vertices of the two components. The number of edges we add is $2^{k+1}-1$ and ensure that new graph is still $(k+1)$-connected. In this new graph, we can also find two vertices respectively in each component. The two vertices have the degree larger than other vertices. They must be at the second level of a perfect binary tree. So the two vertices can cover all the subgraphs isomorphic to $T_k$.
\end{proof}



\section{Computational Complexity and Approximation Algorithms}
\label{}

\newtheorem{proposition1}{Proposition}[section]
\begin{proposition1}
Packing $T_1$ into an arbitrary graph is a NP-hard problem.
\end{proposition1}
\begin{proof}
According to [6], matching problem is NP-complete. Matching problem is to find largest number of vertex-disjoint edges in an arbitrary graph. Suppose we have an algorithm $L$ to deal with the problem on Packing $T_1$ into an arbitrary graph  $G$. We change $G$ into its line graph $G'$ whose one vertex represents a edge of $G$ and one edge represent that two edges are incident with a same vertex of $G$. By using the algorithm $L$, if we find in $G$ a independent subgraph isomorphic to $T_1$, then we can find an independent edge in $G'$. Therefore, we can use $L$ to deal with the matching problem, and according to [8][9], it means that Packing $T_1$ into an arbitrary graph is harder than the matching problem.
\end{proof}

\newtheorem{proposition2}[proposition1]{Proposition}
\begin{proposition2}
Packing $T_2$ into an arbitrary graph is a NP-hard problem.
\end{proposition2}
\begin{proof}
Let $L$ be the algorithm to deal with the problem on Packing $T_1$ into an arbitrary graph. Suppose we have an algorithm $L'$ to deal with the problem on Packing $T_2$ into an arbitrary graph. We regard each structure of one vertex and its two incident edges of $G$ as a vertex of $G'$. An edge of $G'$ represents that one edge of the structure in $G$ is incident with the vertex of another structure in $G$. By using the algorithm $L'$, if we find in $G$ a independent subgraph isomorphic to $T_2$, then we can find in $G'$ an independent subgraphs isomorphic to $T_1$. Therefore we can use $L'$ to deal with the problem on Packing $T_1$ into an arbitrary graph and $L'$ is harder than $L$. Since $L$ is NP-complete, $L'$ is NP-hard.
\end{proof}

In order to apply the packing in the communication network, we design distributed algorithms. A vertex of a graph represents a network node such as a router in the Internet. The edge between two vertices represents the connection or link between two nodes. Each node maintains a routing table.

According to properties of connectivity of arbitrary graph [2], the graph can be constructed with several blocks and the single path between every two blocks. Thus, before packing we need first detect all the blocks of the graph. Let $G=(V, E)$ be an arbitrary graph with $|G|=n, \|G\|=m$ and $v\in{V}$. Let $Tab(v)$ be the routing table of $v$ and $d(v)$ be the degree of $v$. $Tab(v)$ recorded at most $d(v)$ sequences of vertices each of which represent a independent path to other vertices. Let the set $\{v'_1, v'_2, ..., v'_{d(v)} \}$ be the neighbors of $v$, and $\{p'_1, p'_2, ..., p'_{d(v)} \}$ be the possible independent paths recorded in $Tab(v)$. Let $\{B_1, B_2, ..., B_n\}$ be the possible blocks in graph $G$, $P_qs$ be the path between $B_q$ and $B_s$, and vertices $v_i$ and $v_j$ be any two different vertices of $G$. The block detection algorithm is as below:

\newtheorem{algorithm}[proposition1]{Algorithm}
\begin{algorithm}
\end{algorithm}
\textbf{Input:} An arbitrary graph $G=(V, E)$ with $|G|=n, \|G\|=m$.

\textbf{Output:} The blocks $\{B_1, B_2, ..., B_r, r\leq{n}\}$ and $P_{rs}$ between $B_q$ and $B_s$.
\begin{enumerate}
  \item Every vertex $v$ experiences at most $n$ steps to record the possible independent paths $\{p'_1, p'_2, ..., p'_{d(v)} \}$ in its routing table. In each step, there are two actions: refreshing and broadcasting. During step 1, $v$ records $d(v)$ paths to other vertices at most 1-hop distance away, and broadcasts $Tab(v)$ to $\{v'_1, v'_2, ..., v'_{d(v)} \}$. During step 2, $v$ receives $\{Tab(v'_1), Tab(v'_2), ..., Tab(v'_{d(v)}) \}$, refreshes $Tab(v)$ to be $d(v)$ paths to other vertices at most 2-hop distance away, and broadcast new $Tab(v)$ to to $\{v'_1, v'_2, ..., v'_{d(v)} \}$. During step $k$ ($3\leq{k}\leq{n}$), $v$ receives new tables from its neighbors, refreshes $Tab(v)$ to be $l$ ($l\leq{d(v)}$) paths to other vertices at most $k$-hop distance away by deleting the path that has more than one joint vertices with another path (the only joint vertex, if it exists, must be a end of a recorded path).
  \item Compute the number of independent paths between two different vertices $v_i$ and $v_j$: search $Tab(v_i)$ and find if $v_j$ appears in $Tab(v_i)$ and how many times it appears. There are $l$ ($l\leq{d(v_i)}$) paths in $Tab(v_i)$. After $l$ times of search, the number $x_{ij}$ of independent paths can be obtained.
  \item For $i\leq{n-1}$ and $i<j\leq{n}$, if $x_ij=0$, undo; else if $x_{ij}=1$ and $d(v_i)=2$ and $d(v_j)=1$, undo; else if $x_{ij}=1$ and $d(v_i)>2$ and $d(v_j)\leq{2}$, then put $v_i$ into $B_i$; else if $x_{ij}=1$ and $d(v_i)>2$ and $d(v_j)>2$, then put $v_i$ into $B_i$ and put $v_j$ into $B_j$; else if $x_{ij}>{1}$, then put both $v_i$ and $v_j$ into $B_i$; else if $x_{ij}=1$ and $d(v_i)=d(v_j)=2$, then $v_i$ and $v_j$ are both on the path between $B_q$ and $B_s$ and put $v_i$ and $v_j$ into $P_{qs}$.
   \item End.

\end{enumerate}

Complexity analysis: the algorithmic executing time is consumed by two processes: (i)each vertex experience at most $n$ steps to get its final routing table and the whole network experience at most $n$ steps because the vertices work independently at same time; (ii)for every vertex couple of $v_i$ and $v_j$, $v_i$ has to do at most $d(v_i)$ ($d(v_i)\leq{\Delta{(G)}}, \Delta{(G)}=\max\{d(v)|v\in{V}\}$) times of search and there are total $\frac{n(n-1)}{2}$ couples. So, we have $f(n)\leq{n+\Delta(G)\cdot{\frac{n(n-1)}{2}}}$ and the algorithm complexity is $O(n^2)$.

\newtheorem{algorithm1}[proposition1]{Algorithm}
\begin{algorithm1}
\end{algorithm1}
\textbf{Input:} An arbitrary graph $G=(V, E)$ with $|G|=n, \|G\|=m$.

\textbf{Output:} Vertex-disjoint subgraphs isomorphic to $T_1$.

\begin{enumerate}
  \item Run Algorithm 3.3 to do the block detection.
  \item Find the longest cycle in each block: (i)choose $f$ ($1\leq{f}\leq{n}$, because the worst case is that a block contains all $n$ vertices) vertices as the beginning vertices to form a cycle; (ii)from the beginning vertex $v_i$, the longest cycle $C_{v_i}$ can be obtained by using the distributed algorithm to find longest-distance between any two vertices; (iii)cancel the vertices on $C_{v_i}$ whose degree are equal to 2; choose next beginning vertex $v_j$ from the remaining vertices on $C_{v_i}$ and do the longest cycle $C_{v_j}$search again; (iv)if $\|C_{v_i}\|<\|C_{v_j}\|$, then cancel $v_i$, or else cancel $v_j$.
  \item Pack $T_1$ into the longest cycle $C_k$ in block $B_k$. Let $P$ be a $C_k$-path. If $\|p\|>3$, then Pack $T_1$ into $P$; else undo. If $\|P_{rs}\|>3$, then Pack $T_1$ into $P_{rs}$; else undo.
  \item End.
\end{enumerate}

Complexity analysis: Algorithm 3.3 has the complexity of $O(n^2)$. It takes the complexity of $O(n^2)$ to do the longest cycle search. In the worst case, a block contains all $n$ vertices each of which has a degree larger than 2. We need $\frac{n(n-1)}{2}$ times of cycle length comparison. So, Algorithm 3.4 has the complexity of $O(n^2)$.

\newtheorem{algorithm2}[proposition1]{Algorithm}
\begin{algorithm2}
\end{algorithm2}
\textbf{Input:} An arbitrary graph $G=(V, E)$ with $|G|=n, \|G\|=m$.

\textbf{Output:} Vertex-disjoint subgraphs isomorphic to $T_2$.

\begin{enumerate}
  \item Run Algorithm 3.3 to do the block detection.
  \item In each block, find vertex-disjoint subgraphs isomorphic to $K^4$ and judge whether the subgraphs isomorphic to $G_3$ can be constructed by searching the neighbors of the subgraphs isomorphic to $K^4$.
  \item Pack $T_2$ into the subgraphs isomorphic to $G_3$.
  \item End.
\end{enumerate}

Complexity analysis: the algorithmic executing time is mainly consumed in finding vertex-disjoint subgraphs isomorphic to $K^4$. So, Algorithm 3.5 has the complexity of $O(n^4)$.

\begin{flushleft}
    \textbf{Acknowledgment}
\end{flushleft}

This work is supported by the National High-Tech Research and Development Program of China (863) under Grant No. 2011AA010701, and partially supported by the National Basic Research Program of China (973 Program) under Grant 2013CB329100 and 2013CB329102.








\end{document}